\documentclass[11pt]{amsart}

\usepackage[utf8]{inputenc}
\usepackage{amsmath,amssymb,amsthm, array, bm}
\usepackage{hyperref}
\usepackage{xcolor}

\newtheorem{theorem}{Theorem}[section]
\newtheorem{lemma}[theorem]{Lemma}
\newtheorem{corollary}[theorem]{Corollary}
\newtheorem{remark}[theorem]{Remark}

\title{Cusp volumes and rational points on spheres}
\author{Konrad Zyhalko}
\address{Technical University of Munich, Germany}
\email{konrad.zyhalko@tum.de}

\begin{document}
\maketitle

\begin{abstract}
In recent work on the Fourier coefficients of light-cone Eisenstein series arising in the problem of counting rational points on spheres, Kelmer and Yu conjectured that the leading coefficient in the asymptotic formula for the counting function is rational up to a volume factor. We prove this conjecture by showing that every cusp volume appearing in the corresponding light-cone Eisenstein series is rational.
\end{abstract}

\section{Introduction}

Counting rational points with bounded denominators on $n$-dimensional spheres is a classical topic at the intersection of analytic and algebraic number theory. The investigation of this counting problem may be conducted using many different approaches, including the circle method \cite{94587f09-97a1-3a36-8ad9-ae4374902ee0, grosswald1985representations}, harmonic analysis on groups \cite{10.1215/S0012-7094-93-07107-4}, analysis of modular forms \cite{burrin2024rationalpointsellipsoidsmodular} or the study of particular light-cone Eisenstein series \cite{kelmer2023fourierexpansionlightconeeisenstein}, each offering unique insights and allowing for different generalisations, but yielding the same main result
\[N(T) \sim \frac{\omega_n}{n} T^n, \quad T\to\infty,\]
where $N(T)$ is the investigated quantity and $\omega_n$ is a coefficient depending on $n$. One striking feature of the approach based on light-cone Eisenstein series is that it yields an explicit, geometric interpretation of the leading constant $\omega_n$. 

Fix the quadratic form $Q_n(\bm v):=\sum_{j=1}^{n+1}v_j^2-v_{n+2}^2$ and its positive light-cone $\mathcal{V}_{Q_n}^+$. The unit sphere $S^n$ is obtained by projectivisation; if $\bm v\in \mathcal{V}_{Q_n}^+$, then $(v_1/v_{n+2},\dots,v_{n+1}/v_{n+2})\in S^n$. Thus rational points on $S^n$ correspond to primitive integer points on the positive light-cone. The group $G:=\operatorname{SO}_{Q_n}^+(\mathbb R)$ is the connected component preserving each sheet of the cone, and taking the lattice $\Gamma:=\operatorname{SO}_{Q_n}^+(\mathbb Z)$, the cusps of $\Gamma\backslash G$ correspond to $\Gamma$-orbits of rational rays in $\mathcal{V}^+_{Q_n}$, or equivalently to $\Gamma$-orbits of primitive integral points on the positive light-cone. Kelmer and Yu studied the associated light-cone Eisenstein series and showed that the residue at the first pole is expressed as a normalized sum of cusp volumes; with the Haar measure normalizations recalled below, their formula takes the form of \[\mu_G(\Gamma \backslash G)\omega_n=\sum_{i=1}^\kappa v_{P_i},\] where the $P_i$ are representatives of the cuspidal parabolic subgroups and $v_{P_i}$ are the corresponding cusp volumes (see below for the exact definitions). Based on explicit computations in small dimensions, they conjectured that this quantity is always rational.

\begin{center}
\scriptsize
\newcommand{\tall}{\rule[-10pt]{0pt}{25pt}}
\begin{tabular}{|c|c|c|c|}
\hline
$n$ & $\mu_G(\Gamma \backslash G)$ & $\omega_{Q_n}$ & $\mu_G(\Gamma \backslash G)\omega_{Q_n}$ \\
\hline
\tall $1$  & $\dfrac{\pi}{2}$ & $\dfrac{4}{\pi}$ & $2$ \\
\hline
\tall $2$  & $\dfrac{L(2, \chi_{-4})}{6}$ & $\dfrac{3}{L(2, \chi_{-4})}$ & $\dfrac{1}{2}$ \\
\hline
\tall $3$  & $\dfrac{\pi^2}{720}$ & $\dfrac{60}{\pi^2}$ & $\dfrac{1}{12}$ \\
\hline
\tall $4$  & $\dfrac{7\zeta(3)}{7680}$ & $\dfrac{80}{7\zeta(3)}$ & $\dfrac{1}{96}$ \\
\hline
\tall $5$  & $\dfrac{\pi^3}{388800}$ & $\dfrac{405}{\pi^3}$ & $\dfrac{1}{960}$ \\
\hline
\tall $6$  & $\dfrac{L(4, \chi_{-4})}{181440}$ & $\dfrac{63}{4L(4, \chi_{-4})}$ & $\dfrac{1}{11520}$ \\
\hline
\tall $7$  & $\dfrac{\pi^4}{457228800}$ & $\dfrac{2835}{17\pi^4}$ & $\dfrac{1}{161280}$ \\
\hline
\tall $8$  & $\dfrac{527\zeta(5)}{22295347200}$ & $\dfrac{512}{31\zeta(5)}$ & $\dfrac{17}{4354560}$ \\
\hline
\tall $9$  & $\dfrac{\pi^5}{164602368000}$ & $\dfrac{16065}{4\pi^5}$ & $\dfrac{17}{696729600}$ \\
\hline
\tall $10$ & $\dfrac{L(6, \chi_{-4})}{5748019200}$ & $\dfrac{165}{16L(6, \chi_{-4})}$ & $\dfrac{1}{557383680}$ \\
\hline
\tall $11$ & $\dfrac{691\pi^6}{3107034298368000}$ & $\dfrac{20945925}{2764\pi^6}$ & $\dfrac{31}{18393614400}$ \\
\hline
\tall $12$ & $\dfrac{8775\zeta(7)}{2448564210892800}$ & $\dfrac{512}{8775\zeta(7)}$ & $\dfrac{31}{1471492195200}$ \\
\hline
\end{tabular}
\end{center}

They conjectured that this holds for all $n$. This paper presents a proof of this statement by showing the stronger statement that every individual cusp volume $v_{P_i}$ is rational. In particular, the observed rationality is not a consequence of cancellation between cusps, but follows from the arithmetic structure of the unipotent stabilizers.

\begin{theorem} \label{conjecture}
    For every cusp $P_i$, the volume $v_{P_i}$ is rational.
\end{theorem}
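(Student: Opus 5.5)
The plan is to reduce the cusp volume $v_{P_i}$ to the covolume of a lattice in a unipotent group that is identified with Euclidean space $\mathbb R^n$, and then show this covolume is rational because the lattice is commensurable with $\mathbb Z^n$ through rational linear data.

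\textbf{Step 1: explicit unipotent radical.} Let $P=P_i$ be the stabilizer in $G$ of a rational isotropic ray spanned by a primitive integral vector $\bm w\in\mathcal V_{Q_n}^+$, and write $P=MAN$ in its Langlands decomposition. Following the normalizations recalled below, $v_P$ is (up to a rational constant fixed by the Haar measure conventions) the volume $\mu_N(\Gamma\cap N\backslash N)$, possibly multiplied by a factor coming from the compact part $M\cong \operatorname{SO}(n)$ and the finite group $(\Gamma\cap P)/(\Gamma\cap N)$. The first task is to check against the paper's normalization that these extra factors are rational. The $M$-volume should cancel or be normalized to $1$, and the index of a finite group is an integer.

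\textbf{Step 2: coordinates.} Choose $g\in \operatorname{SO}^+_{Q_n}(\mathbb Q)$ (Witt's theorem over $\mathbb Q$) carrying $\bm w$ to a multiple of the standard isotropic vector $e=(0,\dots,0,1,1)$. Then $gNg^{-1}$ is the standard unipotent group $N_0=\{n_{\bm x}:\bm x\in\mathbb R^{n}\}$, where
\[ n_{\bm x} = \text{the Eichler transformation } v\mapsto v + B(v,e)\bm x' - B(v,\bm x')e - \tfrac12 Q(\bm x')B(v,e)e, \]
with $\bm x'=(\bm x,0,0)$. The map $\bm x\mapsto n_{\bm x}$ is a group isomorphism $\mathbb R^n\to N_0$, and its matrix entries are polynomial in $\bm x$ with rational coefficients. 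I take the Haar measure on $N_0$ to be Lebesgue measure $d\bm x$, multiplied by whatever rational or explicitly specified constant the normalization below requires. Conjugating by the rational element $g$ rescales this measure by $|\det(\mathrm{Ad}\,g|_{\mathfrak n})|$, which is rational because $g$ is rational. If instead the paper normalizes $\mu_N$ intrinsically through the $K$-Iwasawa decomposition, I would compute that scaling directly.

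\textbf{Step 3: the lattice.} The group $\Lambda:=g(\Gamma\cap N)g^{-1}$ corresponds to $\{\bm x : n_{\bm x}\in g\Gamma g^{-1}\}$. Since $g$ has bounded denominators $d$, this set contains $d'\mathbb Z^n$ for a suitable integer $d'$ (controlling the quadratic term $\tfrac12Q(\bm x')$ as well), and it is contained in $\tfrac{1}{d''}\mathbb Z^n$. It is therefore a lattice commensurable with $\mathbb Z^n$, and its covolume $[\,\tfrac1{d''}\mathbb Z^n:\Lambda]\,(d'')^{-n}$ is rational.

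\textbf{Main obstacle.} Combining the steps gives $v_{P}\in\mathbb Q$. I expect the main obstacle to be Step 1/2 bookkeeping: verifying that the paper's Haar normalization on $N$ (and any $M$, $A$ factors built into $v_P$) introduces no transcendental constant such as $\operatorname{vol}(S^{n-1})$ or $\pi$. If it does, I would show that the same constant appears in the definition of $\omega_n$ and is absorbed there. I would check this first by computing the case $n=1$ and comparing with the table.
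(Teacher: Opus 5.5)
Your proposal is correct and follows essentially the same route as the paper: a rational conjugator $g_i\in\operatorname{SO}^+_{Q_n}(\mathbb Q)$ from Witt's theorem, the reduction $v_{P_i}=\operatorname{covol}(\Lambda)/[\Gamma_i:\Gamma_{i_U}]$, and rationality of the lattice $\Lambda\subset\mathbb R^n$. The Step 1/2 bookkeeping you flag is trivial here, because $\mu_M(M)=1$ and $v_{P_i}$ is defined directly in the standard $L$ with $\bm e_0 g_i^{-1}=\bm v_i$ exactly, so no $\operatorname{Ad}$-rescaling or absorption into $\omega_n$ is needed; the only real difference is that you prove the two-sided commensurability $d'\mathbb Z^n\subseteq\Lambda\subseteq\frac1{d''}\mathbb Z^n$ (which also gives full rank), whereas the paper only shows $\Lambda\subset\mathbb Q^n$ from the linear entries of $u_x$ and takes the full-rank lattice property as known.
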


\section{Preliminaries} \label{section 2}

We begin by introducing the necessary setting. In the following, we focus on conciseness, rather than details. Unless stated otherwise, for the proofs and detailed descriptions, see \cite{kelmer2023fourierexpansionlightconeeisenstein}. Also, note that all the presented results, including Theorem \ref{conjecture}, can easily be generalised to all rational quadratic forms of signature $(n+1,1)$, but for clarity purposes, we restrict ourselves to the case of the sphere.

As usual, we denote the $n+1$-dimensional upper half space as $\mathbb{H}^{n+1} := \{(\bm{x}, y) \in \mathbb{R}^n \times \mathbb{R}^+\}$, for $n\in \mathbb{N}.$ The hyperbolic space associated with $\Gamma$ may be then identified with $\mathbb H^{n+1}\cong G/K,$ where $K$ is a maximal compact subgroup of $G$.
   We define the cones
\[
\mathcal V_{Q_n}:=\{\bm v\in\mathbb R^{n+2}\setminus\{0\}:Q_n(\bm v)=0\},
\qquad
\mathcal V_{Q_n}^+:=\{\bm v\in \mathcal V_{Q_n}:v_{n+2}>0\},
\]
and fix $\bm e_0:=(-1,0,\dots,0,1)\in\mathcal V_{Q_n}^+.$ Let $P$ be the parabolic group stabilising the line spanned by $\bm e_0$. Then $P$ has Langlands decomposition $P=UAM,$ where $U\cong\mathbb R^n$, $A$ is one-dimensional, and $M\cong\operatorname{SO}_n(\mathbb R)$. We write $L:=UM$ for the stabiliser of $\bm e_0$. By the Iwasawa decomposition $G=UAK$, each $g\in G$ can be written as
\[
g=u_xa_yk,
\qquad x\in\mathbb R^n,\ y>0,\ k\in K.
\]

\begin{remark} \label{parametrisation}
    The matrix $u_x\in U$ can be explicitly parametrised by $x\in \mathbb{R}^n$.
    \[
u_x =
\begin{pmatrix}
1-\dfrac{\lVert x\rVert^2}{2} & x & \dfrac{\lVert x\rVert^2}{2} \\[6pt]
-x^t & I_n & x^t \\[6pt]
-\dfrac{\lVert x\rVert^2}{2} & x & 1+\dfrac{\lVert x\rVert^2}{2}
\end{pmatrix}
\]
    In particular, the parametrisation contains linear factors.
\end{remark}

With this notation, we normalise the Haar measures by
\[
\mu_K(K)=1, \quad d\mu_G(g)=y^{-(n+1)}\,d\bm x\,dy\,d\mu_K(k), \quad d\mu_L(u_xm)=d\bm x\,d\mu_M(m),
\]
where $\mu_M$ is the probability Haar measure on $M$.

For a parabolic subgroup $P_i$ of $G$, we write $\Gamma_{P_i}:=\Gamma\cap P_i$ and $\Gamma_{U_i}=\Gamma\cap U_i$. We define the \emph{cusps} of $\Gamma$ as the $\Gamma$-conjugacy classes of parabolic subgroups of $G$ such that $\Gamma_{U_i}$ is non-trivial. Moreover, we say $\Gamma$ has a \emph{cusp} at $[\bm v]$ if $\Gamma$ has a cusp at $P_{[\bm v]}$, being the unique parabolic subgroup fixing $[\bm v]$.

\begin{lemma} \label{rational}
    $\Gamma$ has a cusp at a ray $[\bm v]\subset\mathcal V_{Q_n}^+$ if and only if $[\bm v]$ is rational, i.e. has a rational representative.
\end{lemma}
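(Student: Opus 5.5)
We prove the two implications separately. In both directions we use that $\Gamma_{U}$ is nontrivial exactly when it is a lattice in $U\cong\mathbb R^n$, so that $\Gamma\cap P$ acts cocompactly on the horosphere. This is the standard dichotomy for unipotent subgroups of arithmetic groups. In fact we only need the weaker claim that some nontrivial integral unipotent element fixes $[\bm v]$.

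\emph{Rational rays give cusps.} Let $[\bm v]$ be rational, and scale $\bm v$ to a primitive integral vector with $Q_n(\bm v)=0$. We exhibit explicit integral unipotent elements fixing $\bm v$, namely Eichler (Siegel) transformations. For $\bm w\in\mathbb Z^{n+2}$ with $B(\bm v,\bm w)=0$, where $B$ is the bilinear form of $Q_n$, put
\[
E_{\bm v,\bm w}(\bm z)=\bm z+B(\bm z,\bm v)\bm w-B(\bm z,\bm w)\bm v-\tfrac12 Q_n(\bm w)B(\bm z,\bm v)\bm v .
\]
A routine computation shows:
\begin{itemize}
\item $E_{\bm v,\bm w}$ preserves $Q_n$;
\item it is unipotent, hence lies in the identity component and preserves the sheets;
\item it fixes $\bm v$;
\item it is nontrivial whenever $\bm w\notin\mathbb R\bm v$.
\end{itemize}
If $Q_n(\bm w)$ is odd, replace $\bm w$ by $2\bm w$; then $E_{\bm v,\bm w}$ has integer entries. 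Since $\bm v^\perp\cap\mathbb Z^{n+2}$ has rank $n+1>1$, such a $\bm w$ exists. Therefore $\Gamma_{U_{[\bm v]}}$ is nontrivial. Alternatively, one can find $\gamma\in\operatorname{SO}_{Q_n}(\mathbb Q)$ with $\gamma\bm e_0\in\mathbb R\bm v$. Then $\gamma U\gamma^{-1}\cap\Gamma$ contains $\gamma u_x\gamma^{-1}$ for $x$ in a sufficiently divisible lattice, because by Remark \ref{parametrisation} the entries of $u_x$ are polynomial in $x$ with bounded denominators. Both routes work; I would present the Eichler one as the more self-contained.

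\emph{Cusps give rational rays.} Suppose $\gamma\in\Gamma_{U_{[\bm v]}}$ is nontrivial. Then $\gamma$ is a nontrivial unipotent element of $\operatorname{SO}_{Q_n}(\mathbb Z)$. Its fixed space $\ker(\gamma-I)$ is a rational subspace, since $\gamma-I$ is an integer matrix. Conjugating into $U$ and using the parametrisation $u_x$ with $x\neq0$, we compute directly that the fixed space of $u_x$ is $\{\bm z: B(\bm z,\bm e_0)=0,\ \bm z\cdot(0,x,0)=0\}$. This space is degenerate, with radical exactly $\mathbb R\bm e_0$. Thus $\ker(\gamma-I)$ is a rational subspace whose radical is the line $\mathbb R\bm v$. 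The radical of a rational subspace, taken with respect to a rational form, is rational, so $[\bm v]$ has a rational representative.

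\emph{Main obstacle.} The delicate step is this last identification: we need the fixed space of every nontrivial element of $U$ to be degenerate with radical exactly the isotropic line. Equivalently, we need $\bm v$ to be the unique isotropic fixed direction. This follows from the explicit matrix above, but one must check that the fixed space contains no further isotropic vectors. This holds because the form has signature $(n+1,1)$, so a degenerate subspace has a one-dimensional radical and is positive semidefinite modulo that radical.
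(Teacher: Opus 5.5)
Your proof is correct. It takes a genuinely different route from the paper, which gives no argument of its own for this lemma and simply imports it from Fishman--Kleinbock--Merrill--Simmons (Proposition~3.15(ii) there).

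You instead prove both directions directly against the paper's definition of a cusp, namely nontriviality of $\Gamma_{U_i}$.

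\emph{Rational rays give cusps.} The Eichler transformations $E_{\bm v,\bm w}$ with $\bm w\in 2(\bm v^\perp\cap\mathbb Z^{n+2})$ are explicit nontrivial elements of $\Gamma\cap U_{[\bm v]}$. Moreover $E_{\bm v,\bm w}E_{\bm v,\bm w'}=E_{\bm v,\bm w+\bm w'}$, and $E_{\bm v,\bm w}$ depends only on $\bm w$ modulo $\mathbb R\bm v$. So these elements form a lattice in $U_{[\bm v]}\cong\bm v^\perp/\mathbb R\bm v$, and you get the stronger (lattice) notion of cusp at no extra cost.

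\emph{Cusps give rational rays.} Recovering $\mathbb R\bm v$ as the radical $F\cap F^\perp$ of the rational subspace $F=\ker(\gamma-I)$ is a clean way to pass rationality from $\gamma$ to its fixed point. Your computation is right: after conjugation $F=\operatorname{span}(\bm e_0,(0,x,0))^\perp$, whose radical is $\mathbb R\bm e_0$ because $Q_n((0,x,0))=\|x\|^2\neq0$. The signature argument in your last paragraph is therefore a cross-check rather than a needed step.

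The paper's citation buys brevity and places the lemma in its general context. Your argument buys a self-contained proof driven by the same mechanism as the proof of Theorem~\ref{conjecture}. Indeed, your alternative route for the first direction is exactly the rational-conjugator lemma of Section~3 combined with Remark~\ref{parametrisation}.

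Three small things to tidy:
\begin{enumerate}
\item The coefficient $\tfrac12 Q_n(\bm w)$ presupposes the normalisation $B(\bm z,\bm z)=Q_n(\bm z)$. With the polar form $B(\bm z,\bm z)=2Q_n(\bm z)$ it must read $Q_n(\bm w)$, and integrality is then automatic. State which convention you use.
\item Justify that $E_{\bm v,\bm w}$ lies in the unipotent radical $U_{[\bm v]}$, not merely in the stabiliser of $\bm v$. For example, it acts trivially on $\mathbb R\bm v$, on $\bm v^\perp/\mathbb R\bm v$ and on $\mathbb R^{n+2}/\bm v^\perp$. Alternatively, $AM$ contains no nontrivial unipotent elements, so every unipotent element of $P=UAM$ lies in $U$.
\item The opening appeal to the lattice dichotomy is never used and can be dropped.
\end{enumerate}
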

\begin{proof}
    For proof, see  Proposition 3.15(ii) in \cite{fishman2021intrinsicdiophantineapproximationquadric}.
\end{proof}

It follows that the cusps correspond, after clearing denominators, to $\Gamma$-orbits of primitive integral points on the light cone. More precisely, there exist representatives $\bm v_1,\dots,\bm v_\kappa\in \mathcal V_{Q_n}^+(\mathbb Z)_{\operatorname{pr}}$ such that
\[
\mathcal V_{Q_n}^+(\mathbb Z)_{\operatorname{pr}}
=
\bigsqcup_{i=1}^{\kappa}\bm v_i\Gamma,
\]
and the rays $[\bm v_1],\dots,[\bm v_\kappa]$ form a full set of $\Gamma$-inequivalent cusps. 

We will also need the following result on the structure of the cusps.

\begin{lemma} \label{lemma:2.4}
    Let $\Gamma$ be a non-uniform lattice of $G$, and suppose $\Gamma$ has a cusp at $P'$ for some parabolic subgroup $P'=U'A'M'=U'M'A'$. Then $\Gamma_{P'} \le U'M'$ and $[\Gamma_{P'}:\Gamma_{U'}]<\infty$.
\end{lemma}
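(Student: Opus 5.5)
The plan has two parts, matching the two assertions of Lemma \ref{lemma:2.4}. The first is a contraction argument against the discreteness of $\Gamma$, which shows that $\Gamma_{P'}$ has no $A'$-component. The second shows that $\Gamma_{U'}$ is cocompact in $U'$ and then uses discreteness once more to bound $[\Gamma_{P'}:\Gamma_{U'}]$.

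\emph{Step 1: $\Gamma_{P'}\le U'M'$.} Write $P'=U'\rtimes(A'M')$. The projection $\pi:P'\to A'M'\cong A'\times M'$ is a homomorphism, and $A'$ is central in $A'M'$. Take $\gamma=u_0am\in\Gamma_{P'}$, so that $\pi(\gamma^k)=a^km^k$. Identify $U'\cong\mathbb R^n$ as in Remark \ref{parametrisation}, transported to $P'$. Under this identification, conjugation by $a$ acts on $U'$ as multiplication by a scalar $\lambda(a)>0$, and $\lambda(a)\neq 1$ when $a\neq 1$. Conjugation by $m$ acts as a rotation, and conjugation by $u_0$ is trivial because $U'$ is abelian.

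Since $P'$ is a cusp, $\Gamma_{U'}$ contains some $u_x\ne e$. Then
\[
\gamma^k u_x\gamma^{-k}=u_{x_k},\qquad \lVert x_k\rVert=\lambda(a)^k\lVert x\rVert .
\]
If $a\ne 1$, let $k\to+\infty$ or $k\to-\infty$, whichever makes $\lambda(a)^k\to 0$. This produces infinitely many distinct nontrivial elements of $\Gamma$ accumulating at $e$, which contradicts discreteness. Hence $a=1$ and $\gamma\in U'M'$.

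\emph{Step 2: $\Gamma_{U'}$ is a lattice in $U'$.} This is where I expect the main difficulty, since for a general discrete subgroup of $U'M'\cong\mathbb R^n\rtimes \operatorname{SO}_n$ Bieberbach-type results do not give finite index. For our $\Gamma$, however, the cusp is rational by Lemma \ref{rational}. I would therefore choose $g\in G(\mathbb Q)$ with $\bm e_0 g\in[\bm v]$, so that $P'=g^{-1}Pg$ and $U'=g^{-1}Ug$.

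By Remark \ref{parametrisation}, $u_x\in \Gamma_U$ whenever $x\in 2\mathbb Z^n$. Hence $\Gamma_U$ is cocompact in $U$. Conjugating by the rational matrix $g$, the group $g^{-1}\Gamma_U g$ is commensurable with a subgroup of $\Gamma_{U'}$, because $g$ has bounded denominators. This shows that $\Gamma_{U'}$ contains a full-rank lattice of $U'\cong\mathbb R^n$. For a general non-uniform lattice one would instead invoke the standard fact, via Margulis' lemma or Garland--Raghunathan, that cuspidal unipotent stabilisers are cocompact; but the arithmetic route suffices here.

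\emph{Step 3: finite index.} Let $C\subset U'$ be a compact set with $U'=C\,\Gamma_{U'}$; then $U'M'=C\,\Gamma_{U'}\,M'$. Since $\Gamma_{U'}$ is normal in $\Gamma_{P'}$ and $M'$ normalises $U'$, every coset $\gamma\Gamma_{U'}$ with $\gamma\in\Gamma_{P'}$ has a representative in $CM'$. The set $CM'$ is compact because $M'\cong\operatorname{SO}_n(\mathbb R)$ is compact. By discreteness, $\Gamma\cap CM'$ is finite, so there are only finitely many cosets and $[\Gamma_{P'}:\Gamma_{U'}]<\infty$.
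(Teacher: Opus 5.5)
Your argument is correct, but it takes a different route from the paper. The paper gives no argument at all: it simply cites Lemma 2.1 of Kelmer--Yu, which covers an arbitrary non-uniform lattice. You prove the lemma directly.

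\begin{itemize}
\item Your Step 1 contraction argument needs only a single nontrivial element of $\Gamma_{U'}$, which is exactly the paper's definition of a cusp. It is therefore slightly more economical than the usual argument, that conjugation by $\gamma\in\Gamma_{P'}$ preserves $\Gamma_{U'}$ and so cannot rescale its covolume by $\lambda(a)^n$. That argument needs $\Gamma_{U'}$ to be a lattice already.
\item Step 3 is the standard compactness-plus-discreteness count.
\item The only non-elementary input is cocompactness of $\Gamma_{U'}$ in $U'$. You establish it only for $\Gamma=\operatorname{SO}^+_{Q_n}(\mathbb Z)$, using Lemma \ref{rational}, the Witt-type lemma of Section 3, and commensurability. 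There is no circularity, since neither of those depends on Lemma \ref{lemma:2.4}.
\end{itemize}

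So for general non-uniform lattices your proof relies on an outside citation just as the paper's does. For the lattice the paper actually uses, it is essentially self-contained.

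Two small refinements are worth making.

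\begin{itemize}
\item The phrase ``$g$ has bounded denominators'' deserves one more line, because $x\mapsto u_x$ is quadratic. Let $N$ clear the denominators of $g$ and $g^{-1}$. For $x\in 2N^2\mathbb Z^n$ one has $u_x-I\in N^2M_{n+2}(\mathbb Z)$. Hence $g^{-1}u_xg=I+(Ng^{-1})\tfrac{u_x-I}{N^2}(Ng)\in\Gamma\cap U'$, and these elements form a full-rank lattice in $U'$.
\item You can avoid citing Lemma \ref{rational}. Take any $1\neq\gamma\in\Gamma_{U'}$. The isotropic line of the cusp is exactly the radical of $Q_n$ restricted to the fixed space of $\gamma$. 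That fixed space is defined over $\mathbb Q$ because $\gamma$ is an integer matrix, so the line is rational.
\end{itemize}
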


\begin{proof}
    See Lemma 2.1 in \cite{kelmer2023fourierexpansionlightconeeisenstein}.
\end{proof}

Now, for each cusp $P_i$ and a representative $\bm v_i$, choose an element $g_i\in G$ such that $\bm v_i=\bm e_0g_i^{-1}.$ We then define the corresponding cusp volume as $v_{P_i}:=\mu_L(g_i^{-1}\Gamma_{P_i}g_i\backslash L).$ 

\begin{lemma} \label{definition}
    The cusp volume is well-defined. In particular, it is independent of the choice of representative and $g_i^{-1}\Gamma_{P_i}g_i < L$, for $g_i$ as above. Moreover, $g_i^{-1}P_ig_i=P$.
\end{lemma}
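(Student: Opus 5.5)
The three claims are that $g_i^{-1}P_ig_i=P$, that $g_i^{-1}\Gamma_{P_i}g_i<L$, and that $v_{P_i}$ does not depend on the choices made. I will prove them in that order.

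\emph{Conjugating $P_i$ to $P$.} Let $P_i$ be the stabiliser of the ray $[\bm v_i]$, and suppose $\bm v_i=\bm e_0 g_i^{-1}$. For $h\in G$ we have
\[
[\bm e_0]\,g_i^{-1}hg_i=[\bm e_0]
\iff [\bm v_i]\,h=[\bm v_i].
\]
Hence $g_i^{-1}P_ig_i$ is exactly the stabiliser of $[\bm e_0]$, which is $P$.

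\emph{The conjugated stabiliser lies in $L$.} By Lemma~\ref{lemma:2.4}, $\Gamma_{P_i}\le U_iM_i$. Here $U_iM_i$ is the subgroup of $P_i$ acting trivially on the line, so it is the stabiliser of the vector $\bm v_i$ and not merely of the ray. The same conjugation computation, now with vectors in place of rays, shows that $g_i^{-1}(U_iM_i)g_i$ is the stabiliser of $\bm e_0$, which is $L$. Therefore $g_i^{-1}\Gamma_{P_i}g_i<L$.

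One can also see this directly. Elements of $\Gamma_{P_i}$ map $\bm v_i$ to $\lambda\bm v_i$ with $\lambda>0$. Since $\bm v_i$ is primitive integral and $\gamma$ is integral with integral inverse, $\lambda=1$.

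\emph{Independence of the choices.} There are two choices to control.

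First, $g_i$ is not unique. Any other admissible choice has the form $g_i'=g_i\ell$ with $\ell\in L$, since $\bm e_0 g_i'^{-1}=\bm e_0 g_i^{-1}$ forces $g_i^{-1}g_i'\in\operatorname{Stab}(\bm e_0)=L$. Put $\Lambda=g_i^{-1}\Gamma_{P_i}g_i$. Then
\[
g_i'^{-1}\Gamma_{P_i}g_i'=\ell^{-1}\Lambda\ell,
\]
and right multiplication by $\ell$ gives a bijection $\Lambda\backslash L\to \ell^{-1}\Lambda\ell\backslash L$. This bijection preserves $\mu_L$ because $L$ is unimodular: $U\rtimes M$ with $M$ compact, and conjugation by $M$ preserves $d\bm x$. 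So the covolume is unchanged.

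Second, the representative $\bm v_i$ may be replaced by $\bm v_i\gamma$ with $\gamma\in\Gamma$. The new parabolic is $\gamma^{-1}P_i\gamma$, and $\Gamma_{\gamma^{-1}P_i\gamma}=\gamma^{-1}\Gamma_{P_i}\gamma$. We may take $g_i'=\gamma^{-1}g_i$, because
\[
\bm e_0 g_i'^{-1}=\bm e_0 g_i^{-1}\gamma=\bm v_i\gamma.
\]
Then $g_i'^{-1}\Gamma_{\gamma^{-1}P_i\gamma}\,g_i'=g_i^{-1}\Gamma_{P_i}g_i$, which is literally the same group, and together with the first point this gives independence of the representative.

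\emph{Main obstacle.} The one step needing care is the unimodularity of $L$, i.e.\ that $\mu_L$ is bi-invariant. This holds because conjugation by $M\cong\operatorname{SO}_n$ acts on $U\cong\mathbb R^n$ by rotations, which preserve $d\bm x$.
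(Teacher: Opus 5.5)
The paper gives no argument of its own here; it just cites Lemma 2.3 of Kelmer--Yu. Your proposal is a correct, self-contained substitute for that citation. It uses three ingredients: the conjugation identity $h\in P_i\iff g_i^{-1}hg_i\in P$ and its vector analogue with $L=\operatorname{Stab}(\bm e_0)$; the containment $\Gamma_{P_i}\le g_iLg_i^{-1}$; and the two sources of non-uniqueness, namely $g_i\mapsto g_i\ell$ with $\ell\in L$, and $\bm v_i\mapsto\bm v_i\gamma$ together with $g_i\mapsto\gamma^{-1}g_i$. These are exactly what is needed.

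One step is misdescribed. Right multiplication by $\ell$ sends $\Lambda$-cosets to $\Lambda$-cosets, not to $\ell^{-1}\Lambda\ell$-cosets. The map you want is $\Lambda x\mapsto(\ell^{-1}\Lambda\ell)\,\ell^{-1}x$, i.e.\ left multiplication by $\ell^{-1}$. It carries a fundamental domain $F$ for $\Lambda$ to the fundamental domain $\ell^{-1}F$ for $\ell^{-1}\Lambda\ell$, and it uses only left invariance of $\mu_L$. So unimodularity of $L$ is true (since $L\cong\mathbb R^n\rtimes\operatorname{SO}_n$) but is not the obstacle you make it. Conjugation $x\mapsto\ell^{-1}x\ell$ also works, and that version does use unimodularity.

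Your two arguments for $g_i^{-1}\Gamma_{P_i}g_i<L$ buy different things. The primitivity argument avoids Lemma~\ref{lemma:2.4}, but it depends on the arithmetic setting of this paper: $\Gamma=\operatorname{SO}^+_{Q_n}(\mathbb Z)$ and primitive integral $\bm v_i$. The route through Lemma~\ref{lemma:2.4} works for an arbitrary non-uniform lattice, as in Kelmer--Yu.

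Finally, independence genuinely holds only along the $\Gamma$-orbit of the representative, which is exactly the case you treat. Rescaling $\bm v_i$ by $c>0$ changes $g_i$ by an element of $A$ and multiplies the covolume by $c^{\pm n}$.
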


\begin{proof}
    See Lemma 2.3 in \cite{kelmer2023fourierexpansionlightconeeisenstein}.
\end{proof}

Since conjugation transports decompositions, the above lemma also automatically implies that the group $P_i$ has (analogously to $P$) a Langlands decomposition $P_i=U_iA_iM_i$ with $U_i=g_iUg_i^{-1}, A_i=g_iAg_i^{-1}$ and $M_i=g_iMg_i^{-1}$.

We proceed with the definition of the normalised light-cone Eisenstein series. For $\Re(s)>n$, let
\[
E_{Q_n}(s,g):=\|\bm e_0\|^s
\sum_{\bm v\in\mathcal V_{Q_n}^+(\mathbb Z)_{\operatorname{pr}}}
\|\bm v g\|^{-s}.
\]

\begin{lemma} \label{eisenstein}
    The series $E_{Q_n}(s,g)$ has a meromorphic continuation to the whole $s$-plane, and is holomorphic in the half space $\Re(s)\ge n/2$ except for a simple pole at $s=n$ with constant residue
    $$\omega_n:=\mu_G(\Gamma \backslash G)^{-1}\sum_{i=1}^\kappa v_{P_i},$$
    and possibly finitely many simple poles in the interval $(\frac n2, n)$.
\end{lemma}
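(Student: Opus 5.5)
The statement to prove is Lemma~\ref{eisenstein}, on the meromorphic continuation of $E_{Q_n}(s,g)$ and the value of its residue at $s=n$. The plan is to reduce $E_{Q_n}(s,g)$ to a finite sum of standard Eisenstein series attached to the cusps, and then read off the residue from the constant-term analysis (Langlands/Selberg).

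\textbf{Step 1: reduction to cusp Eisenstein series.} Using the decomposition $\mathcal V_{Q_n}^+(\mathbb Z)_{\operatorname{pr}}=\bigsqcup_{i=1}^\kappa \bm v_i\Gamma$, split the sum as
\[
E_{Q_n}(s,g)=\|\bm e_0\|^s\sum_{i=1}^\kappa\sum_{\gamma\in \operatorname{Stab}_\Gamma(\bm v_i)\backslash\Gamma}\|\bm v_i\gamma g\|^{-s}.
\]
By Lemma~\ref{lemma:2.4} and Lemma~\ref{definition}, $\operatorname{Stab}_\Gamma(\bm v_i)=\Gamma_{P_i}$, since $\Gamma_{P_i}\le U_iM_i$ fixes $\bm v_i$ and not just its ray. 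Next write $\bm v_i=\bm e_0g_i^{-1}$ and use the Iwasawa decomposition $g_i^{-1}\gamma g=u_xa_yk$. Since $L$ fixes $\bm e_0$, $a_y$ scales $\bm e_0$ by a power of $y$ (namely $\bm e_0a_y=y^{-1}\bm e_0$ in the chosen normalisation), and $K$ preserves norms, we get
\[
\|\bm e_0\|^s\|\bm v_i\gamma g\|^{-s}=y(g_i^{-1}\gamma g)^s.
\]
Hence each inner sum is the classical Eisenstein series $E_{P_i}(s,g)=\sum_{\Gamma_{P_i}\backslash\Gamma}y(g_i^{-1}\gamma g)^s$ attached to the cusp $P_i$.

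\textbf{Step 2: continuation and the location of poles.} Cite the general theory of Eisenstein series for non-uniform lattices in rank-one groups (Langlands; see also Selberg's method as presented in Kelmer--Yu). The constant term of $E_{P_i}$ at cusp $P_j$ has the form $\delta_{ij}y^s+\phi_{ij}(s)y^{n-s}$. Continuation follows from the Fredholm theory, or Selberg's method, together with the functional equation $\Phi(s)\Phi(n-s)=I$. In $\Re(s)\ge n/2$ the poles are simple and lie in $(n/2,n]$, finitely many, and correspond to residual spectrum. The pole at $s=n$ comes from the constant eigenfunction, and its residue is a constant function.

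\textbf{Step 3: computing the residue.} This is the main obstacle; I would use unfolding against a test function. Take $\psi\in C_c^\infty(\mathbb R^+)$ and form the incomplete Eisenstein series $\Theta_\psi(g)=\sum_i\sum_{\Gamma_{P_i}\backslash\Gamma}\psi(y(g_i^{-1}\gamma g))$. Unfolding via $\Gamma_{P_i}\backslash G\cong g_i^{-1}\Gamma_{P_i}g_i\backslash L\times A\times K$ with the chosen Haar measures gives
\[
\int_{\Gamma\backslash G}\Theta_\psi\,d\mu_G=\sum_i v_{P_i}\int_0^\infty\psi(y)\,y^{-(n+1)}\,dy .
\]
Mellin inversion writes $\Theta_\psi$ as a contour integral of $E(s,g)\hat\psi(s)$. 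Shift the contour past $s=n$; the residue there is the constant $\omega_n$ times $\hat\psi(n)$. Comparing this with the projection of $\Theta_\psi$ onto constants, which is $\mu_G(\Gamma\backslash G)^{-1}\int\Theta_\psi$, yields
\[
\omega_n=\mu_G(\Gamma\backslash G)^{-1}\sum_i v_{P_i}.
\]
The delicate points are justifying the contour shift, which uses the polynomial growth of $E$ on vertical lines, and checking that the remaining poles in $(n/2,n)$ contribute only non-constant square-integrable residual forms, which are orthogonal to constants.
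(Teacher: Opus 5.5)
The paper gives no argument for this lemma. Its proof is a one-line citation of Corollary~3.5 in Kelmer--Yu. Your proposal is a correct sketch of the standard derivation behind that citation, so the real comparison is between outsourcing the result and reconstructing it.

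Your steps are as follows:
\begin{itemize}
\item You split the sum over the orbits $\bm v_i\Gamma$.
\item You identify each orbit sum with the cusp Eisenstein series $\sum_{\Gamma_{P_i}\backslash\Gamma}y(g_i^{-1}\gamma g)^s$, using $\operatorname{Stab}_\Gamma(\bm v_i)=\Gamma_{P_i}$, $\bm e_0a_y=y^{-1}\bm e_0$ and $K\subset\mathrm{O}(n+2)$.
\item You take the continuation and pole structure from Langlands--Selberg theory.
\item You obtain the residue by unfolding an incomplete Eisenstein series against the constant function.
\end{itemize}
The citation buys brevity and inherits Kelmer--Yu's conventions wholesale. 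Your route makes visible the point the rest of the paper actually depends on. With $\mu_K(K)=\mu_M(M)=1$ and $d\mu_L=d\bm x\,d\mu_M$, the residue of each cusp series is exactly $v_{P_i}/\mu_G(\Gamma\backslash G)$, with no stray factors. Hence rationality of the $v_{P_i}$ transfers directly to $\mu_G(\Gamma\backslash G)\omega_n$.

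Two details should be tightened.

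First, the map $L\times A\times K\to G$ is not a bijection. Its fibres are the orbits $(lm,a,m^{-1}k)$ with $m\in M$. Your unfolding identity is correct precisely because $\mu_M$ and $\mu_K$ are both probability measures. Alternatively, use $G=UAK$ and the $[\Gamma_i:\Gamma_{i_U}]$-sheeted covering $\Gamma_{i_U}\backslash G\to\Gamma_i\backslash G$, as in the paper's proof of the main theorem.

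Second, after shifting the contour you must show that the leftover integral along $\Re s=n/2$ (the continuous-spectrum part) has zero projection onto constants. Handling the residual forms at poles in $(n/2,n)$ is not enough on its own. This orthogonality is part of the spectral decomposition you are already citing. You can also avoid it, together with the intermediate poles, by shifting only to a line $\Re s=\sigma<n$ with no poles in $[\sigma,n)$. Then use that for $0<\Re s<n$ the function $E(s,\cdot)$ is integrable over $\Gamma\backslash G$ with integral $0$, which follows from Green's identity on a truncated fundamental domain. The growth bounds you already need for the contour shift justify exchanging the two integrals.
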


\begin{proof}
    See Corollary 3.5 in \cite{kelmer2023fourierexpansionlightconeeisenstein}.
\end{proof}

We can now return to our original problem of counting rational points with bounded denominators on $n$-dimensional spheres. Namely, for any $T\ge 1$, we define the counting function by
    \[N(T):=\#\Big\{\bm v\in \mathcal{V}_{Q_n}^+(\mathbb{Z})_{\operatorname{pr}}:\frac{\|\bm v\|}{\|\bm e_0\|} \le T \Big\}.\]
Furthermore, as we have
    \[\bm v \in \mathcal{V}_{Q_n}^+(\mathbb{Z})_{\operatorname{pr}} \implies  Q_n(\bm v)=0 \iff \sum_{k=1}^{n+1}\Big(\frac{v_k}{v_{n+2}}\Big)^2=1,\]
we can identify the counted $\bm v$ with the reduced rational points on the unit sphere. Then, for $\bm v \in \mathcal{V}_{Q_n}^+(\mathbb{Z})_{\operatorname{pr}}$, we have
\[\frac{\|\bm v\|}{\|\bm e_0\|}\le T \iff v_{n+2}\Biggl(\sum_{k=1}^{n+1}\Big(\frac{v_k}{v_{n+2}}\Big)^2+1 \Biggl)\le 2T \iff v_{n+2}\le T\]
and so our function $N(T)$ is equal to
\[N(T)=\#\Big\{\frac{\bm p}{q}\in S^n:(\bm p, q)\in \mathbb{Z}^{n+2}_{\operatorname{pr}}, 1\le q\le T \Big\}.\]
 
Using that, we can rewrite the normalised light-cone Eisenstein series for the identity element in $G$, $E_{Q_n}(s):=E_{Q_n}(s,1_G)$, as a classical Dirichlet series. Letting $v_{n+2}=:q$, we get $\sum_{i=1}^{n+1}v_i^2=q^2$ for all $\bm{v} \in \mathcal{V}_{Q_n}^+(\mathbb{Z})_{\operatorname{pr}}$, and so $\|\bm v\|^2=2q^2$. With $\|\bm e_0\|^2=2$, we get
    \[E_{Q_n}(s)=\sum_{k=1}^\infty a_kk^{-s}\]
with $a_k:=\#\{\bm v\in \mathcal{V}_{Q_n}^+(\mathbb{Z})_{\operatorname{pr}}:v_{n+2}=k\}=\#\Big\{\frac{\bm p}{q}\in S^n:(\bm p, q)\in \mathbb{Z}^{n+2}_{\operatorname{pr}}, q=k\Big\}.$
As, by Lemma~\ref{eisenstein}, $E_{Q_n}(s)$ is holomorphic for $s\ge n$ except for a simple pole at $s=n$, we apply the Wiener-Ikehara Theorem \cite{murty2024wiener} to obtain
    \[N(T)=\sum_{k=1}^Ta_k \sim \frac{\omega_{n}}{n}T^n,\]
where $\omega_n=\frac{1}{\mu_G(\Gamma \backslash G)}\sum_{i=1}^\kappa v_{P_i}$ is the residue at $s=n$.

\section{Rationality of $\mu_G(\Gamma \backslash G)\omega_{n}$}

To prove Theorem~\ref{conjecture}, we first need a simple lemma on the choice of $g$.

\begin{lemma}
Let $Q_n$ be a non-degenerate quadratic form on $\mathbb Q^{n+2}$, as before. Let $u,w\in \mathbb Q^{n+2}$
be non-zero isotropic vectors. If $u$ and $w$ lie on the same sheet of $V_{Q_n}^+(\mathbb R)$, then there exists $g\in \operatorname{SO}_{Q_n}^+(\mathbb Q)$
such that $ug=w$.
\end{lemma}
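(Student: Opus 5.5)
Since the row-vector convention is $\bm v\mapsto \bm v g$, the plan is to build $g$ from rational reflections via Witt's theorem and then correct both the determinant and the spinor/sheet condition. The first step is to extend $u$ and $w$ to rational hyperbolic pairs. As $Q_n$ is non-degenerate over $\mathbb Q$ and $u\neq 0$, there is $u'\in\mathbb Q^{n+2}$ with $B(u,u')=1$, where $B$ is the associated bilinear form. Replacing $u'$ by $u'-\tfrac{Q_n(u')}{2}u$ makes $u'$ isotropic. The same construction gives $w'$ with $B(w,w')=1$ and $Q_n(w')=0$. The map $u\mapsto w$, $u'\mapsto w'$ is then a rational isometry between the hyperbolic planes $H_u=\langle u,u'\rangle$ and $H_w=\langle w,w'\rangle$.

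Next, we extend this isometry to all of $\mathbb Q^{n+2}$. The orthogonal complements $H_u^\perp$ and $H_w^\perp$ are both positive definite rational spaces of dimension $n$. By Witt's cancellation theorem over $\mathbb Q$, applied to $\mathbb Q^{n+2}=H_u\perp H_u^\perp=H_w\perp H_w^\perp$, they are isometric. Gluing the two isometries yields $h\in O_{Q_n}(\mathbb Q)$ with $uh=w$. Equivalently, Witt's extension theorem can be applied directly to the isometry $\mathbb Q u\to\mathbb Q w$.

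The main step is to land in $\operatorname{SO}^+_{Q_n}(\mathbb Q)$, i.e. to arrange determinant $1$ and preservation of each sheet. The idea is to precompose $h$ with a rational element $r$ that fixes $u$, so that $rh$ still sends $u$ to $w$. Take $z\in H_u^\perp$ rational with $Q_n(z)>0$, which exists since $H_u^\perp$ is positive definite, and let $r_z$ be the reflection in $z^\perp$. It fixes $u$, has determinant $-1$, and preserves each sheet, because $z$ is spacelike and a reflection in a spacelike vector preserves the time orientation. Composing with $r_z$ if necessary, we obtain determinant $1$.

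Sheet preservation then follows automatically. Any $g\in O_{Q_n}(\mathbb R)$ either preserves both sheets of the cone or swaps them. Our $g$ maps $u\in\mathcal V^+$ to $w\in\mathcal V^+$, so it cannot swap them, and hence $g$ lies in the component preserving each sheet. With determinant $1$ this gives $g\in\operatorname{SO}^+_{Q_n}(\mathbb Q)$.

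I expect the main obstacle to be making this parity bookkeeping clean. One has to check that $\det$ and the sheet-preservation character are independent, so that fixing one does not break the other. This works because the reflection $r_z$ changes the determinant while fixing the sheets, and the sheet condition is forced by the hypothesis that $u$ and $w$ lie on the same sheet. The standing assumption $n\ge1$ guarantees that $H_u^\perp\neq 0$, so such a $z$ exists.
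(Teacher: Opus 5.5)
Your proposal is correct and follows essentially the same route as the paper. Both arguments complete $u$ and $w$ to rational hyperbolic pairs and extend $u\mapsto w$ to a rational isometry by Witt's theorem. Both then fix the determinant by precomposing with a rational reflection fixing $u$; your $z\in H_u^\perp$ is a special case of the paper's anisotropic $v\in u^\perp$. Finally, both get sheet preservation from the fact that $u$ and $w$ lie on the same sheet. Your correction $u'-\tfrac{Q_n(u')}{2}u$ is the one consistent with the convention $B(x,x)=Q_n(x)$ used in the paper's reflection formula; the paper's $u'-Q_n(u')u$ is off by this factor of $2$, which is harmless.
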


\begin{proof}
Since $Q_n$ is non-degenerate, we can choose $u',w'\in \mathbb Q^{n+2}$ such that $B_{Q_n}(u,u')=1,\; B_{Q_n}(w,w')=1.$ Replacing $u'\mapsto u'-Q_n(u')u, w'\mapsto w'-Q_n(w')w,$ we may also assume $Q_n(u')=Q_n(w')=0.$ Thus $(u,u')$ and $(w,w')$ are hyperbolic pairs over $\mathbb Q$.
\par Now, define an isometry between the two hyperbolic planes by $u\mapsto w,\;u'\mapsto w'$. By Witt's theorem over the field $\mathbb{Q}$ (as $\operatorname{char}\mathbb{Q}\ne2)$, this extends to some isometry $g_0\in O_{Q_n}(\mathbb Q)$ of the whole quadratic space with $ug_0=w.$
\par If $\det g_0=-1$, choose $v\in u^\perp\cap \mathbb Q^{n+2}$ with $Q_n(v)\neq 0$, and let
$$r_v(x)=x-\frac{2B_{Q_n}(x,v)}{Q_n(v)}\,v$$
be the corresponding reflection. Then $r_v\in O_{Q_n}(\mathbb Q)$, $\det r_v=-1$, and
$r_v(u)=u$. Hence $g:=r_v g_0\in SO_{Q_n}(\mathbb Q)$ and still satisfies $ug=w$. Since $u$ and $w$ lie on the same sheet of $V_{Q_n}^+(\mathbb R)$, this $g$ preserves that sheet, hence $g\in \operatorname{SO}_{Q_n}^+(\mathbb Q).$
\end{proof}

Now, we can proceed with the proof of the conjecture. The key point is that, after choosing rational cusp representatives, the corresponding conjugated unipotent stabilizers are lattices in $U\cong \mathbb R^n$ whose coordinate lattices are contained in $\mathbb Q^n$.

\begin{proof}[Proof of Theorem \ref{conjecture}]
    Per Lemma \ref{rational}, every ray with a cusp has at least one rational representative $\bm v_i\in \mathcal{V}_{Q_n}^+(\mathbb{Z})_{\operatorname{pr}}$. For all cusps, choose this $\bm v_i$, and then by the above lemma we can choose $g_i$ to be a matrix with rational entries satisfying $e_0g^{-1}_i=\bm v_i$.
    \par As $v_{P_i}=\mu_L(g_i^{-1}\Gamma_{P_i}g_i \backslash L)$, set $\Gamma_i:=g_i^{-1}\Gamma_{P_i}g_i \le L$ (as per Lemma \ref{definition}) and $\Gamma_{i_U}:=\Gamma_i\cap U$. By Lemma \ref{definition}, we have $g_i^{-1 }P_ig_i=P$ and $g_i^{-1}U_ig_i=U.$ Moreover, by Lemma \ref{lemma:2.4}, $[\Gamma_{P_i}:\Gamma_{U_i}]<\infty.$ Conjugating by $g_i$, we obtain $[g_i^{-1}\Gamma_{P_i}g_i:g_i^{-1}\Gamma_{U_i}g_i]=[\Gamma_i:\Gamma_{i_U}]<\infty$, as $U_i<P_i$.
    \par Take a fundamental domain $D_U\subset U$ for $\Gamma_{i_U} \backslash U$, then $D:=D_UM$ becomes a fundamental domain for $\Gamma_{i_U} \backslash L$. Its measure is
    $$\mu_L(D)=\int_M \int_{D_U} du \, d\mu_M(m)=\operatorname{vol}(D_U)\mu_M(M)=\operatorname{vol}(D_U).$$
    Therefore,
    $$\mu_L(\Gamma_i \backslash L)=\frac{\mu_L(D)}{[\Gamma_i : \Gamma_{i_U}]}=\frac{\operatorname{vol}(D_U)}{[\Gamma_i : \Gamma_{i_U}]}=\frac{\operatorname{vol}(\Gamma_{i_U} \backslash U)}{[\Gamma_i : \Gamma_{i_U}]},$$
    and it remains to show that $\operatorname{vol}(\Gamma_{i_U} \backslash U)$ is rational. Define $\Lambda:=\{x\in\mathbb R^n:u_x\in\Gamma_{i_U}\}$. As it is a full-rank lattice in $\mathbb{R}^n \cong U$, we have $\operatorname{vol}(\Gamma_{i_U} \backslash U)=\operatorname{covol}(\Lambda)$. Since $g_i\in \operatorname{SO}_{Q_n}^+(\mathbb Q)\subset G$ and $\Gamma_{P_i}\subset \operatorname{SO}_{Q_n}^+(\mathbb Z)$, every element of $\Gamma_i=g_i^{-1}\Gamma_{P_i}g_i$ has rational entries. In particular, for every $u_x\in\Gamma_{i_U}$, the matrix $u_x$ has rational entries. By the explicit parametrisation $x\mapsto u_x$ (cf. Remark~\ref{parametrisation}), the coordinates of $x$ can be read off from the linear entries of the matrix $u_x$, and we conclude $x\in\mathbb Q^n$ and $\Lambda\subset\mathbb Q^n$. Therefore $\Lambda$ admits a basis consisting of vectors in $\mathbb Q^n$, and so $\operatorname{covol}(\Lambda)\in\mathbb Q$. The claim follows.
\end{proof}

Using the formula $\omega_n=\mu_G(\Gamma \backslash G)^{-1}\sum_{i=1}^\kappa v_{P_i}$, we conclude the following

\begin{corollary}
    For $n\in \mathbb{N},$ we have $\mu_G(\Gamma \backslash G)\omega_n\in \mathbb{Q}$.
\end{corollary}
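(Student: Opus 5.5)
The corollary should follow by combining Theorem~\ref{conjecture} with the residue formula of Lemma~\ref{eisenstein}. Lemma~\ref{eisenstein} gives
\[
\omega_n=\mu_G(\Gamma\backslash G)^{-1}\sum_{i=1}^{\kappa}v_{P_i}.
\]
Since $\Gamma$ is a lattice in $G$, the volume $\mu_G(\Gamma\backslash G)$ is a finite positive real number. So I will multiply both sides by it to obtain
\[
\mu_G(\Gamma\backslash G)\,\omega_n=\sum_{i=1}^{\kappa}v_{P_i}.
\]

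Next I need that the sum is finite, so that rationality of each term passes to the total. The index $\kappa$ counts the $\Gamma$-inequivalent cusps, equivalently the $\Gamma$-orbits in $\mathcal V_{Q_n}^+(\mathbb Z)_{\operatorname{pr}}$. For a non-uniform lattice of finite covolume in a rank-one group such as $G$, the number of cusps is finite; this is already implicit in the statement of Lemma~\ref{eisenstein}, which writes the residue as a sum indexed by $i=1,\dots,\kappa$. Hence the right-hand side is a finite sum.

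By Theorem~\ref{conjecture}, each $v_{P_i}$ is rational. A finite sum of rationals is rational, so $\mu_G(\Gamma\backslash G)\omega_n\in\mathbb Q$ for every $n\in\mathbb N$.

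The only point needing care is the finiteness of $\kappa$, and I expect no real obstacle there, since it is part of the setup inherited from \cite{kelmer2023fourierexpansionlightconeeisenstein}. I would simply cite it, and I would also note that the independence of $v_{P_i}$ from the chosen representative (Lemma~\ref{definition}) makes the sum well-defined.
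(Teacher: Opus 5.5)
Your proposal is correct and matches the paper's deduction exactly. Multiply the residue formula $\omega_n=\mu_G(\Gamma\backslash G)^{-1}\sum_{i=1}^{\kappa}v_{P_i}$ from Lemma~\ref{eisenstein} by $\mu_G(\Gamma\backslash G)$, then apply Theorem~\ref{conjecture} to each term of the finite sum. Your remarks on the finiteness of $\kappa$ and the well-definedness of $v_{P_i}$ are already implicit in the paper's setup.
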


\newpage
\section*{Acknowledgements}
This note is based on a part of the author's bachelor's thesis, completed under the supervision of Violetta Weger and advised by Claire Burrin, to whom the author is particularly grateful for her guidance and helpful comments, as well as the proofreading of this paper.

\bibliographystyle{plain}          
\bibliography{references}   

\end{document}